\documentclass[a4paper,11pt,reqno]{amsart}


\usepackage{url}
\usepackage[colorlinks=true, linkcolor=blue, citecolor=ForestGreen]{hyperref}
\usepackage{cite}

\usepackage{latexsym,amsmath,amssymb,amsfonts,amsthm}
\usepackage{mathrsfs}
\usepackage{mathtools}
\usepackage{dsfont}
\usepackage{bbm}
\usepackage{soul}
\usepackage{cancel}
\usepackage[utf8]{inputenc}

\usepackage[usenames, dvipsnames]{color}
\usepackage[usenames, dvipsnames, cmyk]{xcolor}
\usepackage{graphicx}
\usepackage[scriptsize,hang,raggedright]{subfigure}
\usepackage{multirow}
\usepackage{enumerate}
\usepackage{enumitem}

\usepackage{a4wide}
\usepackage{epsfig,epstopdf}


\setlength{\footskip}{30pt}


\setcounter{tocdepth}{1}
\numberwithin{equation}{section}

\definecolor{grey}{rgb}{.7,.7,.7}
\definecolor{refkey}{gray}{.45}
\definecolor{labelkey}{gray}{.45}








\newtheorem{theorem}{Theorem}[section]
\newtheorem{proposition}[theorem]{Proposition}
\newtheorem{lemma}[theorem]{Lemma}

\newtheorem*{mainresult}{Main Result}

\theoremstyle{remark}
\newtheorem{remark}[theorem]{Remark}
\theoremstyle{definition}
\newtheorem{definition}[theorem]{Definition}
\newtheorem{example}[theorem]{Example}


\usepackage{tikz}
\usepackage{pgf,pgfplots}
\usetikzlibrary{calc}
\usetikzlibrary{decorations.markings}
\usetikzlibrary{decorations.pathmorphing}
\usetikzlibrary{decorations.shapes}
\usetikzlibrary{shapes,arrows,shapes.geometric,patterns,fadings}



\newcommand{\R}{\mathbb R}

\newcommand{\dd}{\,\mathrm{d}}
\renewcommand{\setminus}{\backslash}

\newcommand{\defeq}{\coloneqq}

\newcommand{\pol}{\mathscr{P}}
\newcommand{\p}{\mathcal{P}}
\newcommand{\side}[2]{\mkern2mu \overline{\mkern-2mu #1 #2}}




\newcommand{\ba}{\begin{array}}
\newcommand{\ea}{\end{array}}

\newcommand{\bthm}{\begin{theorem}}
\newcommand{\ethm}{\end{theorem}}
\newcommand{\bprop}{\begin{proposition}}
\newcommand{\eprop}{\end{proposition}}
\newcommand{\blemma}{\begin{lemma}}
\newcommand{\elemma}{\end{lemma}}
\newcommand{\bexmpl}{\begin{example}}
\newcommand{\eexmpl}{\end{example}}

\newcommand{\beqn}{\begin{equation}}
\newcommand{\eeqn}{\end{equation}}
\newcommand{\beqns}{\begin{equation*}}
\newcommand{\eeqns}{\end{equation*}}

\newcommand{\pr}{\prime}
\newcommand{\pt}{\partial}

\renewcommand{\geq}{\geqslant}

\definecolor{mygreen}{rgb}{0.1,0.75,0.2}

\newcounter{myenumi}
\setcounter{myenumi}{0}

\DeclareMathOperator{\Per}{Per}

\DeclareMathOperator{\per}{Per}
\DeclareMathOperator{\capp}{cap}


\allowdisplaybreaks


\title[Alexandrov Theorem for Polygons]{Alexandrov's Soap Bubble Theorem for Polygons}

\author{Marco Bonacini}
\address[Marco Bonacini]{Department of Mathematics, University of Trento, Italy}
\email{marco.bonacini@unitn.it}

\author{Riccardo Cristoferi}
\address[Riccardo Cristoferi]{Department of Mathematics - IMAPP, Radboud University, Nijmegen, The Netherlands}
\email{riccardo.cristoferi@ru.nl}

\author{Ihsan Topaloglu}
\address[Ihsan Topaloglu]{Department of Mathematics and Applied Mathematics, Virginia Commonwealth University, Richmond, VA, USA}
\email{iatopaloglu@vcu.edu}

\date{\today}                                        

\subjclass[2020]{52B60, 35N25, 49K21}
\keywords{Alexandrov's theorem, polygons, criticality, sliding and tilting variations}
\thanks{This is an original manuscript of an article that has been accepted for publication in the American Mathematical Monthly, published by Taylor \& Francis.}

\begin{document}

\begin{abstract}
Regular polygons are characterized as area-constrained critical points of the perimeter functional with respect to particular families of perturbations in the class of polygons with a fixed number of sides. We also review recent results in the literature involving other shape functionals as well as further open problems.
\end{abstract}

\maketitle

\section{Introduction}\label{sec:intro}

Aleksandr Danilovich Alexandrov's Soap Bubble Theorem, as proved in \cite{Ale62_2,Ale62_1}, states that a compact, connected embedded hypersurface with constant mean curvature in the Euclidean space $\R^d$ must be a sphere.

This characterization of the sphere is closely linked to the \emph{isoperimetric property} of the Euclidean ball: among all measurable sets in $\R^d$ having the same volume ($d$-dimensional Lebesgue measure), the Euclidean ball uniquely minimizes the perimeter functional (understood here in the sense of Renato Caccioppoli and Ennio De Giorgi, see \cite{Fus04} for an extensive review). The bridge connecting Alexandrov's Theorem and the Isoperimetric Problem is established through a cornerstone principle in the Calculus of Variations: a minimizing set $E$ must satisfy the first order necessary condition (\emph{criticality}, or \emph{stationarity}). This condition is obtained by considering one-parameter families of competitors $\{E_t\}_{t\in\R}$, with $E_0=E$, and imposing that the \emph{first variation} of the functional vanishes along any such volume-preserving perturbation. For the perimeter functional, the condition entails
\begin{equation} \label{intro-1}
    \frac{\dd}{\dd t}\Big|_{t=0}\per(E_t)=0,
\end{equation}
where $\per(\cdot)$ denotes the perimeter of a set. This condition precisely signifies that an optimal set must have constant (distributional) mean curvature. Consequently, Alexandrov's Theorem characterizes balls as the sole volume-constrained critical points in the isoperimetric problem.\footnote{See also \cite{DelMag19}, where this characterization is proved to hold in the whole class of sets of finite perimeter.}

Here we consider a two-dimensional discrete version of the aforementioned result, specifically when the ambient class is restricted to all (simple) polygons with a fixed number of sides. According to the \emph{polygonal isoperimetric inequality}, the \emph{regular polygon} is the isoperimetric set in this class, its boundary having the shortest length among all polygons with the same area and same number of sides. This fact has been known since ancient times and can be proved by various methods.

A discrete version of Alexandrov's Theorem characterizes instead the regular polygon as the sole area-constrained critical point of the perimeter. In the discrete context, the families of area-preserving perturbations employed to derive the criticality condition \eqref{intro-1} must also preserve both the polygonal structure and the number of sides. One way to proceed is to identify the perimeter of an $N$-gon with a function of $2N$ real variables (the coordinates of the vertices). Then it can be demonstrated that the regular $N$-gon is the only constrained critical point of this function; an algebraic proof of this fact is presented in \cite{Bla05}, see also \cite{Bog23} for an elegant geometric argument. Notice that this result also provides a proof of the polygonal isoperimetric inequality, if one also proves the existence of an optimal polygon: see again \cite{Bog23}.

In this article we identify a \emph{minimal} class of variations that is sufficient to characterize the regular polygon. We define three particular families of perturbations of a polygon: (i) parallel movement of one side, (ii) rotation of one side around its midpoint, and (iii) movement of one vertex parallel to the line joining the two adjacent vertices. Deformations of types (i) and (ii) have already been considered in \cite{BonCriTop22,BucFra16,FraVel19}. These elementary deformations are well-suited to compute the first variation as in \eqref{intro-1} using basic calculus tools (Section~\ref{sec:crit}). Furthermore they are sufficiently general, as any variation that maintains the polygonal structure can be expressed using these deformations (Remark~\ref{rem:variations}). Additionally, they can be used to derive criticality conditions of various other shape functionals.

We show that imposing the criticality condition \eqref{intro-1} with respect to all perturbations of type (i)-(ii), or of type (ii)-(iii), characterizes the regular polygons as the sole critical polygons. We present this as our main result below (see Figure~\ref{fig:intro} for the notation that appears in the statement). We prove this result in Theorem~\ref{thm:alexandrov-pol} in Section~\ref{sec:alexandrov}.

\begin{mainresult}[Alexandrov's Theorem for polygons] \label{thm:main}
    Let $\p$ be a polygon with $N\geq3$ vertices such that for all $i\in\{1,\ldots,N\}$
    \begin{equation} \label{eq:stat-intro1}
    \frac{1}{\ell_i}\Big(\psi(\theta_i)+\psi(\theta_{i+1})\Big) = \frac{\per(\p)}{2\mathrm{Area}(\p)} \qquad\text{and}\qquad \psi(\theta_i)-\psi(\theta_{i+1})=0,
    \end{equation}
    where $\psi(\theta)\defeq \csc(\theta)+\cot(\theta)$, then $\p$ is a regular polygon.

    \smallskip
    Similarly, if $\p$ satisfies for all $i\in\{1,\ldots,N\}$
    \begin{equation} \label{eq:stat-intro2}
    \psi(\theta_i)-\psi(\theta_{i+1})=0 \qquad\text{and}\qquad \cos\alpha_i^- - \cos\alpha_i^+ = 0,
    \end{equation}
    then $\p$ is a regular polygon.
\end{mainresult}

\begin{figure}[ht]
\definecolor{qqqqff}{rgb}{0.25,0.29,0.3}
\definecolor{qqwuqq}{rgb}{1,0.55,0}
\begin{tikzpicture}[scale=0.45,line cap=round,line join=round]
\clip(-7,-2.2) rectangle (7.5,6);
\fill[line width=0pt,fill=qqwuqq!20] (-4,0) -- (4,0) -- (6,1.6) -- (5.6,4.15) -- (-5,5.6) -- (-5.88,3.6) -- cycle;
\draw [line width=1.2pt,dash pattern=on 3pt off 3pt,color=qqqqff] (-5.9,3.6)-- (-5,5.6);
\draw [line width=1.2pt,dash pattern=on 3pt off 3pt,color=qqqqff] (6,1.6)-- (5.6,4.15);
\draw [line width=1.2pt,color=qqqqff] (-5.9,3.6)-- (-4,0);
\draw [line width=1.2pt,color=qqqqff] (-4,0)-- (4,0);
\draw [line width=1.2pt,color=qqqqff] (4,0)-- (6,1.6);
\begin{footnotesize}
\draw [fill=qqqqff] (-4,0) circle (3.5pt);
\draw (-4.5,-0.2) node {$P_i$};
\draw [fill=qqqqff] (4,0) circle (3.5pt);
\draw (4.8,-0.2) node {$P_{i+1}$};
\draw [fill=qqqqff] (-5.9,3.6) circle (3.5pt);
\draw (-6.4,3.2) node {$P_{i-1}$};
\draw [fill=qqqqff] (6,1.6) circle (3.5pt);
\draw (6.8,1.3) node {$P_{i+2}$};
\draw [-,line width=0.7pt] (-3.5,0) arc (0:115:0.5);
\draw (-3.5,0.9) node {\footnotesize $\theta_i$};
\draw [-,line width=0.7pt] (3.5,0) arc (180:45:0.5);
\draw (3.5,0.9) node {\footnotesize $\theta_{i+1}$};
\draw [below] (0,0) node {\footnotesize $\ell_i$};
\end{footnotesize}
\end{tikzpicture}
\begin{tikzpicture}[scale=0.55,line cap=round,line join=round]
\clip(-6,-3) rectangle (6,5);
\fill[line width=0pt,fill=qqwuqq!20] (-3,-2) -- (-4,0) -- (-3.37,3.96) -- (4,0) -- (4.39,-2) -- cycle;
\draw [line width=1.4pt,color=qqqqff] (-4,0)-- (-3.37,3.96);
\draw [line width=1.4pt,color=qqqqff] (-3.37,3.96)-- (4,0);
\draw [line width=1.4pt,color=qqqqff] (4,0)-- (4.2,-1.026);
\draw [line width=1.4pt,color=qqqqff] (-4,0)-- (-3.5,-1);
\draw [line width=1.4pt,dash pattern=on 3pt off 3pt,color=qqqqff] (-3.5,-1)-- (-3,-2);
\draw [line width=1.4pt,dash pattern=on 3pt off 3pt,color=qqqqff] (4.2,-1.026) -- (4.39,-2);
\draw [line width=1pt,dash pattern=on 3pt off 3pt] (-4,0)-- (4,0);
\begin{footnotesize}
\draw [fill=qqqqff] (-4,0) circle (3pt);
\draw (-4.6,-0.2) node {$P_{i-1}$};
\draw [fill=qqqqff] (4,0) circle (3pt);
\draw (4.8,-0.2) node {$P_{i+1}$};
\draw [fill=qqqqff] (-3.37,3.96) circle (3pt);
\draw (-3.5,4.4) node {$P_i$};
\end{footnotesize}
\draw [-,line width=0.7pt] (-3.3,0) arc (0:80:0.7);
\draw (-3,0.7) node {\footnotesize $\alpha_{i}^-$};
\draw [-,line width=0.7pt] (3.1,0) arc (180:155:0.9);
\draw (2.2,0.4) node {\footnotesize $\alpha_{i}^+$};
\end{tikzpicture}
\caption{Notation used in the statement of the Main Result depicting the angles $\theta_i$, $\theta_{i+1}$, $\alpha_i^-$, $\alpha_i^+$, and the length $\ell_i$ of the side $\side{P_i}{P_{i+1}}$.}\label{fig:intro}
\end{figure}
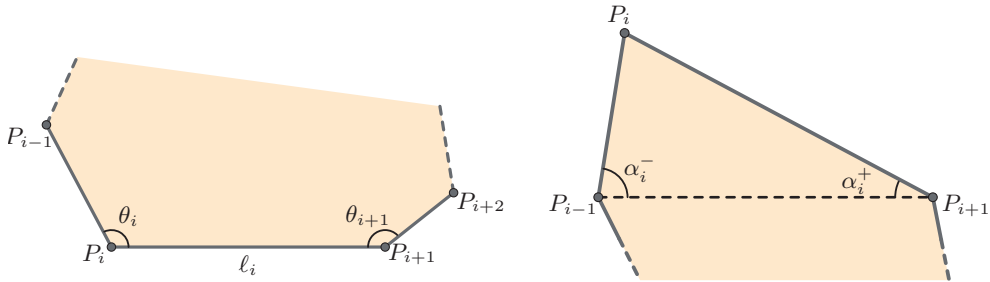

The equations in \eqref{eq:stat-intro1} correspond to the criticality conditions with respect to perturbations of type (i)-(ii), whereas the equations in \eqref{eq:stat-intro2} correspond to the criticality conditions with respect to perturbations of type (ii)-(iii), see Section~\ref{sec:crit} for their derivations. We call this theorem ``Alexandrov's Theorem for polygons'' since the conditions \eqref{eq:stat-intro1} and \eqref{eq:stat-intro2} play the role of the constant mean curvature condition of the classical Alexandov's Theorem in the discrete setting.

Our motivation for writing this article stems from various results and conjectures concerning discrete counterparts of symmetry problems having the Euclidean ball as the solution. The ball is indeed the optimal shape in numerous isoperimetric-type problems involving different functionals---examples include the fractional perimeter, the Riesz energy, the Cheeger constant, and spectral functionals such as the first Dirichlet eigenvalue of the Laplacian. For many of these problems, Alexandrov-type theorems have also been proved, not only characterizing the ball as the optimal domain, but also as the sole critical point. It becomes natural to seek discrete analogs, with a general expectation that the regular polygons should play the role of the ball in the discrete context. In the concluding section we discuss some results in the literature, as well as some open problems and conjectures. 

\section{Criticality Conditions}\label{sec:crit}

In this section we derive the criticality conditions for the perimeter functional under an area constraint, with respect to the three particular classes of perturbations of a polygon, as outlined in the Introduction.

We preliminarily fix some notation. In this paper, the term \emph{polygon} will indicate an open and bounded region of the plane $\R^2$ whose boundary is given by a closed, connected curve consisting of finitely many line segments (\emph{sides}), where only consecutive segments intersect at their endpoints (\emph{vertices}). The class of polygons with $N\geq3$ vertices is denoted by $\pol_N$. The perimeter and the area of a polygon $\p$ are denoted by $\per(\p)$ and $|\p|$, respectively.

Given two points $P,Q\in\R^2$, we denote by $\side{P}{Q}\defeq\{tP+(1-t)Q \colon t\in[0,1]\}$ the segment joining $P$ and $Q$. For $N\geq3$, let $\p\in\pol_N$ be a polygon with $N$ vertices $P_1,\ldots,P_N$. For notational convenience we also set $P_0\defeq P_N$, $P_{N+1}\defeq P_1$. For $i\in\{1,\ldots,N\}$ we let:
\begin{itemize}
\item $\nu_i$ be the exterior unit normal to the side $\side{P_i}{P_{i+1}}$,
\item $\ell_i$ be the length of the side $\side{P_i}{P_{i+1}}$,
\item $\theta_i$ be the interior angle at the vertex $P_i$,
\item $M_i$ be the midpoint of the side $\side{P_i}{P_{i+1}}$.
\end{itemize}
Given a polygon $\mathcal{P}\in\pol_N$ with $N\geq3$ vertices $P_1,\ldots,P_N$ we define the three classes of perturbations specifically as follows.

\begin{definition}[Sliding of one side] \label{def:sliding}
Fix a side $\side{P_i}{P_{i+1}}$, $i\in\{1,\ldots,N\}$. For $t\in\R$ with $|t|$ sufficiently small, we define the polygon $\mathcal{P}_t\in\pol_N$ with vertices $P_1^t,\ldots,P_N^t$ obtained as follows (see Figure~\ref{fig:sliding}):
\begin{enumerate}
\item all vertices except $P_i$ and $P_{i+1}$ are fixed, i.e.
	\[ 
		P_j^t\defeq P_j \text{ for all } j\in\{1,\ldots, N\}\setminus\{i,i+1\};
	\]
\item the vertices $P_i^t$ and $P_{i+1}^t$ lie on the lines containing $\side{P_{i-1}}{P_i}$ and $\side{P_{i+1}}{P_{i+2}}$, respectively;
\item the side $\side{P_i^t}{P_{i+1}^t}$ is parallel to $\side{P_i}{P_{i+1}}$ and at a distance $|t|$ from $\side{P_i}{P_{i+1}}$, in the direction of $\nu_i$ if $t>0$ and in the direction of $-\nu_i$ if $t<0$.
\end{enumerate}
\end{definition}

\begin{figure}[ht]
\centering
\definecolor{qqqqff}{rgb}{0.25,0.29,0.3}
\definecolor{qqwuqq}{rgb}{1,0.55,0}
\begin{tikzpicture}[scale=0.45,line cap=round,line join=round]
\clip(-7,-2.2) rectangle (7.5,6);
\fill[line width=0pt,fill=qqwuqq!20] (-3.5,-0.96) -- (2.7,-0.96) -- (6,1.6) -- (5.6,4.15) -- (-5,5.6) -- (-5.88,3.6) -- cycle;
\draw [line width=1.2pt,dash pattern=on 3pt off 3pt,color=qqqqff] (-5.9,3.6)-- (-5,5.6);
\draw [line width=1.2pt,dash pattern=on 3pt off 3pt,color=qqqqff] (6,1.6)-- (5.6,4.15);
\draw [line width=1.2pt,color=qqqqff] (-5.9,3.6)-- (-4,0);
\draw [line width=1.2pt,color=qqqqff] (-4,0)-- (4,0);
\draw [line width=1.2pt,color=qqqqff] (4,0)-- (6,1.6);
\draw [line width=1.2pt,color=qqwuqq] (-4,0)-- (-3.5,-0.96);
\draw [line width=1.2pt,color=qqwuqq] (4,0)-- (2.7,-0.96);
\draw [line width=1.2pt,color=qqwuqq] (2.7,-0.96)-- (-3.5,-0.96);
\draw [-stealth,line width=1.2pt] (0,0) -- (0,-2);
\draw [->,line width=0.7pt] (-2.253649257845337,0)-- (-2.253649257845337,-0.96);
\begin{footnotesize}
\draw [fill=qqqqff] (-4,0) circle (3.5pt);
\draw (-4.5,-0.2) node {$P_i$};
\draw [fill=qqqqff] (4,0) circle (3.5pt);
\draw (4.8,-0.2) node {$P_{i+1}$};
\draw [fill=qqqqff] (-5.9,3.6) circle (3.5pt);
\draw (-6.4,3.2) node {$P_{i-1}$};
\draw [fill=qqqqff] (6,1.6) circle (3.5pt);
\draw (6.8,1.3) node {$P_{i+2}$};
\draw [fill=qqwuqq] (-3.5,-0.96) circle (3.5pt);
\draw (-4.1,-1.2) node {$P_i^t$};
\draw [fill=qqwuqq] (2.7,-0.96) circle (3.5pt);
\draw (3.5,-1.2) node {$P_{i+1}^t$};
\draw[color=black] (0.7,-2) node {$\nu_i$};
\draw[color=black] (-1.75,-0.5) node {\tiny $t$};
\draw (0,3) node {$\p_t$};
\draw [-,line width=0.7pt] (-3.5,0) arc (0:115:0.5);
\draw (-3.5,0.7) node {\tiny $\theta_i$};
\draw [-,line width=0.7pt] (3.5,0) arc (180:45:0.5);
\draw (3.5,0.8) node {\tiny $\theta_{i+1}$};
\end{footnotesize}
\end{tikzpicture}
\caption{A polygon $\p$ and its variation $\p_t$ (shaded region) as in Definition~\ref{def:sliding}, obtained by sliding the side $\side{P_i}{P_{i+1}}$ in the normal direction at a distance $t>0$.}\label{fig:sliding}
\end{figure}
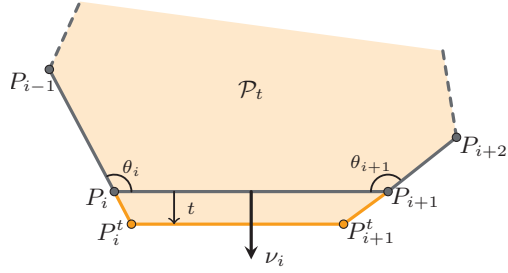

\begin{definition}[Tilting of one side] \label{def:tilting}
Fix a side $\side{P_i}{P_{i+1}}$, $i\in\{1,\ldots,N\}$. For $t\in\R$ with $|t|$ sufficiently small, we define the polygon $\mathcal{P}_t\in\pol_N$ with vertices $P_1^t,\ldots,P_N^t$ obtained as follows (see Figure~\ref{fig:tilting}):
\begin{enumerate}
\item all vertices except $P_i$ and $P_{i+1}$ are fixed, i.e.
	\[
		P_j^t\defeq P_j \text{ for all } j\in\{1,\ldots, N\}\setminus\{i,i+1\};
	\]
\item the vertices $P_i^t$ and $P_{i+1}^t$ lie on the lines containing $\side{P_{i-1}}{P_i}$ and $\side{P_{i+1}}{P_{i+2}}$, respectively;
\item the line containing $\side{P_i^t}{P_{i+1}^t}$ is obtained by rotating the line containing $\side{P_i}{P_{i+1}}$ around the midpoint $M_i$ of $\side{P_i}{P_{i+1}}$ by an angle of amplitude $|t|$;
\item the direction of rotation is such that for $t>0$ the angle $\theta_i$ is decreased by $|t|$ and $\theta_{i+1}$ is increased by $|t|$, whereas for $t<0$ the angle $\theta_i$ is increased by $|t|$ and $\theta_{i+1}$ is decreased by $|t|$.
\end{enumerate}
\end{definition}

\begin{figure}[ht]
\centering
\definecolor{qqqqff}{rgb}{0.25,0.29,0.3}
\definecolor{qqwuqq}{rgb}{1,0.55,0}
\begin{tikzpicture}[scale=0.55,line cap=round,line join=round]
\clip(-7,-1.8) rectangle (7.5,6);
\fill[line width=0pt,fill=qqwuqq!20] (-3.45,-1.04) -- (5.28,1.58) -- (6,2.48) -- (5.57,4.17) -- (-5,5.67) -- (-5.94,3.66) -- cycle;
\draw [line width=1.2pt,dash pattern=on 3pt off 3pt,color=qqqqff] (-5.94,3.66)-- (-5,5.67);
\draw [line width=1.2pt,dash pattern=on 3pt off 3pt,color=qqqqff] (6,2.48)-- (5.57,4.17);
\draw [line width=1.2pt,color=qqqqff] (-5.94,3.66)-- (-4,0);
\draw [line width=1.2pt,color=qqqqff] (-4,0)-- (4,0);
\draw [line width=1.2pt,color=qqqqff] (4,0)-- (6,2.48);
\draw [line width=1.2pt,color=qqwuqq] (-4,0)-- (-3.449980437567673,-1.0361968117094946);
\draw [line width=1.2pt,color=qqwuqq] (-3.449980437567673,-1.0361968117094946)-- (5.2792724920825576,1.585622128425266);
\draw [line width=1.2pt,color=qqwuqq] (5.2792724920825576,1.585622128425266)-- (6,2.48);
\begin{footnotesize}
\draw [fill=qqqqff] (-4,0) circle (3.5pt);
\draw (-4.4,-0.2) node {$P_i$};
\draw [fill=qqqqff] (4,0) circle (3.5pt);
\draw (4.7,-0.2) node {$P_{i+1}$};
\draw [fill=qqqqff] (-5.94,3.66) circle (3.5pt);
\draw (-6.3,3.3) node {$P_{i-1}$};
\draw [fill=qqqqff] (6,2.48) circle (3.5pt);
\draw (6.7,2.4) node {$P_{i+2}$};
\draw [fill] (0,0) circle (3.5pt);
\draw (0.3,-0.4) node {$M_i$};
\draw [fill=qqwuqq] (5.28,1.58) circle (3.5pt);
\draw (5.9,1.4) node {$P_{i+1}^t$};
\draw [fill=qqwuqq] (-3.45,-1.04) circle (3.5pt);
\draw (-3.9,-1.4) node {$P_i^t$};
\draw (0,3) node {$\p_t$};
\draw [->,line width=0.7pt] (1.8,0) arc (0:17:1.8);
\draw (2.1,0.3) node {\tiny $t$};
\draw [-,line width=0.7pt] (-3.5,0) arc (0:115:0.5);
\draw (-3.5,0.7) node {\tiny $\theta_i$};
\draw [-,line width=0.7pt] (3.5,0) arc (180:45:0.5);
\draw (3.5,0.7) node {\tiny $\theta_{i+1}$};
\end{footnotesize}
\end{tikzpicture}
\caption{A polygon $\p$ and its variation $\p_t$ (shaded region) as in Definition~\ref{def:tilting}, obtained by tilting the side $\side{P_i}{P_{i+1}}$ around its midpoint $M_i$ by an angle $t>0$.}\label{fig:tilting}
\end{figure}
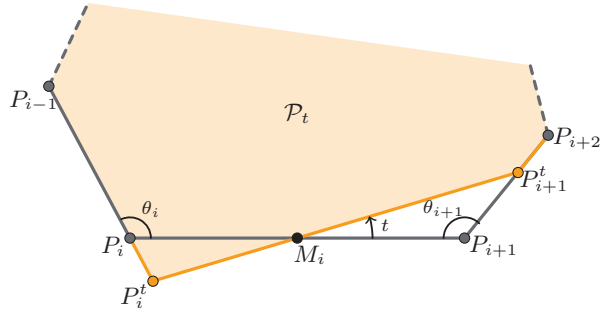

\begin{definition}[Moving of one vertex] \label{def:ricIvar}
Fix three consecutive vertices $P_{i-1}$, $P_i$, $P_{i+1}$, $i\in\{1,\ldots,N\}$, of the polygon $\p$. For $t\in\R$ with $|t|$ sufficiently small, we define the polygon $\mathcal{P}_t\in\pol_N$ with vertices $P_1^t,\ldots,P_N^t$ obtained as follows (see Figure~\ref{fig:ricIvar}):
\begin{enumerate}
\item all vertices except $P_i$ are fixed, i.e., $P_j^t\defeq P_j$ for all $j\in\{1,\ldots, N\}\setminus\{i\}$;
\item the vertex $P_i^t$ is given by
$$
P_i^t = P_i + t\,\frac{P_{i+1}-P_{i-1}}{|P_{i+1}-P_{i-1}|},
$$
that is, $P_i^t$ lies on the line through $P_i$ parallel to the diagonal $\side{P_{i-1}}{P_{i+1}}$, at a distance $|t|$ from $P_i$.
\end{enumerate}
\end{definition}

\begin{figure}[ht]
\centering
\definecolor{qqqqff}{rgb}{0.25,0.29,0.3}
\definecolor{qqwuqq}{rgb}{1,0.55,0}
\begin{tikzpicture}[scale=0.65,line cap=round,line join=round]
\clip(-6,-3) rectangle (6,5);
\fill[line width=0pt,fill=qqwuqq!20] (-2.622,-2.756) -- (-4,0) -- (-2,3.96) -- (4,0) -- (4.54,-2.76) -- cycle;
\draw [line width=1.4pt,color=qqqqff] (-4,0)-- (-3.37,3.96);
\draw [line width=1.4pt,color=qqqqff] (-3.37,3.96)-- (4,0);
\draw [line width=1.4pt,color=qqqqff] (4,0)-- (4.39,-2);
\draw [line width=1.4pt,color=qqqqff] (-4,0)-- (-3,-2);
\draw [line width=1.4pt,dash pattern=on 3pt off 3pt,color=qqqqff] (-3,-2)-- (-2.622,-2.756);
\draw [line width=1.4pt,dash pattern=on 3pt off 3pt,color=qqqqff] (4.39,-2)-- (4.54,-2.76);
\draw [line width=1.4pt,color=qqwuqq] (-4,0)-- (-2,3.96);
\draw [line width=1.4pt,color=qqwuqq] (-2,3.96)-- (4,0);
\draw [line width=1pt,dash pattern=on 3pt off 3pt] (-4,0)-- (4,0);
\begin{footnotesize}
\draw [fill=qqqqff] (-4,0) circle (3pt);
\draw (-4.5,-0.2) node {$P_{i-1}$};
\draw [fill=qqqqff] (4,0) circle (3pt);
\draw (4.6,-0.2) node {$P_{i+1}$};
\draw [fill=qqqqff] (-3.37,3.96) circle (3pt);
\draw (-3.5,4.25) node {$P_i$};
\draw [fill=qqwuqq] (-2,3.96) circle (3pt);
\draw (-1.7,4.25) node {$P_i^t$};
\draw (-2.6,4.2) node {$t$};
\end{footnotesize}
\draw [->,line width=1pt] (-3.37,3.96)-- (-2,3.96);
\draw [-,line width=0.7pt] (-3.3,0) arc (0:80:0.7);
\draw (-3,0.5) node {\tiny $\alpha_{i}^-$};
\draw [-,line width=0.7pt] (3.1,0) arc (180:155:0.9);
\draw (2.7,0.25) node {\tiny $\alpha_{i}^+$};
\end{tikzpicture}
\caption{A polygon $\p$ and its variation $\p_t$ (shaded region) as in Definition~\ref{def:ricIvar}, obtained by moving the vertex $P_i$ parallel to the diagonal $\side{P_{i-1}}{P_{i+1}}$ at a distance $t>0$.}\label{fig:ricIvar}
\end{figure}
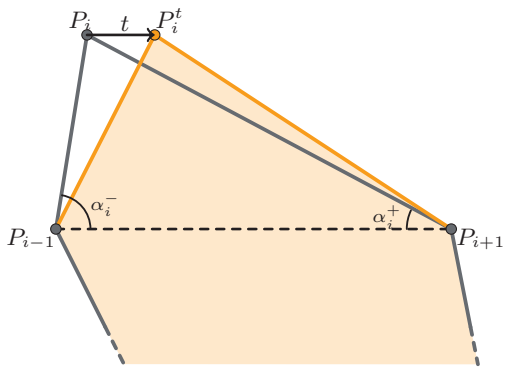

\begin{definition}[Stationarity] \label{def:stationarity}
Let $\p\in\pol_N$ and let $\{\p_t\}_t$ be a one-parameter deformation of $\p$, such as those considered before. We say that $\p$ is \emph{stationary (for the perimeter functional) with respect to the variation $\{\p_t\}_t$ under area constraint} if
\begin{equation} \label{eq:stationary}
\frac{\dd}{\dd t}\left(\frac{\per(\p_t)}{|\p_t|^{1/2}} \right) \bigg|_{t=0} = 0.
\end{equation}
\end{definition}

In the following theorem we derive the stationarity conditions under area constraint for a polygon $\p\in\pol_N$ with respect to the previous three classes of perturbations. Two of these conditions are expressed in terms of the function
\begin{equation} \label{eq:psi}
\psi(\theta)\defeq \frac{1}{\sin(\theta)}+\cot(\theta).
\end{equation}

\begin{theorem}[Stationarity conditions] \label{thm:stationarity}
A polygon $\p\in\pol_N$ is stationary with respect to the sliding variation as in Definition~\ref{def:sliding} on the $i$-th side, for $i\in\{1,\ldots,N\}$, under area constraint if and only if
\begin{equation}\label{eq:stat_sliding}
\frac{1}{\ell_i}\Big(\psi(\theta_i)+\psi(\theta_{i+1})\Big) = \frac{\per(\p)}{2|\p|}\, ,
\end{equation}
where $\psi$ is defined in \eqref{eq:psi}.

A polygon $\p\in\pol_N$ is stationary with respect to the tilting variation as in Definition~\ref{def:tilting} on the $i$-th side, for $i\in\{1,\ldots,N\}$, under area constraint if and only if
\begin{equation}\label{eq:stat_tilting}
\psi(\theta_i)-\psi(\theta_{i+1})=0.
\end{equation}

A polygon $\p\in\pol_N$ is stationary with respect to the moving of the $i$-th vertex as in Definition~\ref{def:ricIvar}, for $i\in\{1,\ldots,N\}$, under area constraint if and only if
\begin{equation}\label{eq:stat_ricIvar}
\cos\alpha_i^- - \cos\alpha_i^+ = 0,
\end{equation}
where $\alpha_{i}^-\in(0,\pi)$ is the angle between $\side{P_{i-1}}{P_{i+1}}$ and $\side{P_{i-1}}{P_i}$, and $\alpha_{i}^+\in(0,\pi)$ is the angle between $\side{P_{i-1}}{P_{i+1}}$ and $\side{P_i}{P_{i+1}}$, see Figure~\ref{fig:ricIvar}.
\end{theorem}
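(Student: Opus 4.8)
The common starting point for all three cases is to unwind the scale-invariant quotient in \eqref{eq:stationary}. Writing $F(t)\defeq \per(\p_t)\,|\p_t|^{-1/2}$ and differentiating, the stationarity condition $F'(0)=0$ is equivalent to the single scalar identity
\begin{equation*}
\frac{\dd}{\dd t}\per(\p_t)\Big|_{t=0} = \frac{\per(\p)}{2|\p|}\,\frac{\dd}{\dd t}|\p_t|\Big|_{t=0}.
\end{equation*}
Thus the whole theorem reduces to computing, for each variation, the two first derivatives $\per'(0)\defeq \frac{\dd}{\dd t}\per(\p_t)|_{t=0}$ and $A'(0)\defeq \frac{\dd}{\dd t}|\p_t||_{t=0}$, and then substituting. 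In each case I would fix a coordinate frame adapted to the affected side (or vertex), keeping all unaffected sides frozen, so that only two or three edge lengths and one localized area contribution vary.

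For the sliding variation I would place $\side{P_i}{P_{i+1}}$ on the $x$-axis with the interior above, so that $\nu_i$ points downward. The requirement that $P_i^t$ and $P_{i+1}^t$ ride along the two adjacent lines forces them to height $-t$ while displacing horizontally, giving $P_i^t=(-t\cot\theta_i,-t)$ and $P_{i+1}^t=(\ell_i+t\cot\theta_{i+1},-t)$. Consequently the two flanking sides lengthen by $t/\sin\theta_i$ and $t/\sin\theta_{i+1}$, while the central side changes by $t(\cot\theta_i+\cot\theta_{i+1})$, so that $\per'(0)=\psi(\theta_i)+\psi(\theta_{i+1})$. The swept region is a trapezoid of height $t$ with parallel sides $\ell_i$ and $\ell_i+O(t)$, whence $A'(0)=\ell_i$. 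Substituting into the reduced identity yields \eqref{eq:stat_sliding}.

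The tilting variation is the delicate one and I expect it to be the main obstacle. Here I would realize $P_i^t$ and $P_{i+1}^t$ as the intersections of the line through $M_i$ rotated by $t$ with the two adjacent lines, solve the two $2\times2$ linear systems for the signed distances $\lambda_i,\lambda_{i+1}$ from $M_i$, and Taylor-expand to first order. This produces a central-side derivative $\tfrac{\ell_i}{2}(\cot\theta_i-\cot\theta_{i+1})$ together with flanking contributions $+\tfrac{\ell_i}{2\sin\theta_i}$ and $-\tfrac{\ell_i}{2\sin\theta_{i+1}}$, which assemble into $\per'(0)=\tfrac{\ell_i}{2}(\psi(\theta_i)-\psi(\theta_{i+1}))$. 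The subtle point is the area: I would compute $A'(0)$ from the shoelace gradient $\nabla_{P_i}(2|\p|)=(y_{i+1}-y_{i-1},\,x_{i-1}-x_{i+1})$ tested against the vertex displacements $\delta_i\approx-\tfrac{\ell_i t}{2}(\cot\theta_i,1)$ and $\delta_{i+1}\approx\tfrac{\ell_i t}{2}(-\cot\theta_{i+1},1)$; the two contributions turn out to be $+\tfrac{\ell_i^2 t}{2}$ and $-\tfrac{\ell_i^2 t}{2}$ and cancel, reflecting that rotation about the midpoint adds and removes equal infinitesimal triangles. Hence $A'(0)=0$, and the reduced identity collapses to $\per'(0)=0$, i.e. \eqref{eq:stat_tilting}.

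Finally, for the vertex move the area computation is immediate: since $P_i^t$ travels parallel to the diagonal $\side{P_{i-1}}{P_{i+1}}$, its displacement $\delta_i=t\,(P_{i+1}-P_{i-1})/|P_{i+1}-P_{i-1}|$ is orthogonal to $\nabla_{P_i}(2|\p|)=(P_{i+1}-P_{i-1})^{\perp}$, so $A'(0)=0$ exactly (the triangle $P_{i-1}P_iP_{i+1}$ keeps both its base and height). For the perimeter I would differentiate the two Euclidean lengths $|P_i^t-P_{i-1}|$ and $|P_{i+1}-P_i^t|$ at $t=0$, obtaining the projections of the unit diagonal onto the two incident sides; by the definitions of $\alpha_i^-$ and $\alpha_i^+$ these projections are precisely $\cos\alpha_i^-$ and $\cos\alpha_i^+$, so $\per'(0)=\cos\alpha_i^--\cos\alpha_i^+$. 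Since $A'(0)=0$, stationarity again reduces to $\per'(0)=0$, which is \eqref{eq:stat_ricIvar}, completing all three cases.
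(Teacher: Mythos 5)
Your proposal is correct and follows essentially the same route as the paper: compute the first-order expansions of $\per(\p_t)$ and $|\p_t|$ for each of the three variations and substitute into the derivative of the quotient $\per(\p_t)|\p_t|^{-1/2}$ (your reduction to $\per'(0)=\tfrac{\per(\p)}{2|\p|}|\p|'(0)$ is exactly the quotient rule the paper applies). The only difference is that you derive the sliding and tilting expansions explicitly in coordinates (including the cancellation giving $|\p_t|=|\p|+o(t)$ for tilting), where the paper simply quotes these identities from \cite{BucFra16}; your computations agree with the quoted formulas.
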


\begin{proof}
To obtain the stationarity conditions, we first express the area and the perimeter of the perturbed polygon $\p_t$ as a function of the variable $t$ (up the first order); we then differentiate the quotient in \eqref{eq:stationary} with respect to $t$ and set it equal to zero.

As in \cite[pp. 103–106]{BucFra16}, the first variations of the area and of the perimeter with respect to the sliding perturbation in Definition~\ref{def:sliding} can be obtained from the identities
$$
|\p_t| = |\p| + \ell_i t + o(t), \qquad \per(\p_t)= \per(\p) +t\bigl(\psi(\theta_i)+\psi(\theta_{i+1})\bigr)
$$
where $\frac{o(t)}{t}\to0$ as $t\to0$. These formulas are simple consequences of geometric and trigonometric arguments. Hence,
    \[
        \frac{\dd}{\dd t}\left(\frac{\per(\p_t)}{|\p_t|^{1/2}} \right) \bigg|_{t=0} = \frac{1}{|\p|}\left(\big(\psi(\theta_i)+\psi(\theta_{i+1})\big)|\p|^{1/2} - \frac{\ell_i}{2|\p|^{1/2}}\per(\p)\right)=0
    \]
implies condition \eqref{eq:stat_sliding}.

Similarly, the first variations of the area and of the perimeter with respect to the tilting perturbation in Definition~\ref{def:tilting} are obtained from the identities (see also \cite[pp.~103–106]{BucFra16})
\begin{gather*}
|\p_t| = |\p| + o(t), \\ \per(\p_t) = \per(\p) - \ell_i + \frac{\ell_i}{2}\biggl(\frac{\sin\theta_{i+1}-\sin t}{\sin(\theta_{i+1}+t)} + \frac{\sin\theta_i+\sin t}{\sin(\theta_i-t)}\biggr).
\end{gather*}
Namely, 
    \begin{align*}
        \frac{\dd}{\dd t}\left(\frac{\per(\p_t)}{|\p_t|^{1/2}} \right) \bigg|_{t=0} &= \frac{\ell_i}{2|\p|^{1/2}}\frac{\dd}{\dd t} \biggl(\frac{\sin\theta_{i+1}-\sin t}{\sin(\theta_{i+1}+t)} + \frac{\sin\theta_i+\sin t}{\sin(\theta_i-t)}\biggr) \bigg|_{t=0} \\
                     &= \frac{\ell_i}{2|\p|^{1/2}}\biggl( \frac{1}{\sin\theta_i}
                        +\cot\theta_i - \frac{1}{\sin\theta_{i+1}} 
                            - \cot\theta_{i+1} \biggr) = 0,
    \end{align*}
and the condition \eqref{eq:stat_tilting} follows.

Finally, the first variations of the area and of the perimeter with respect to the perturbation in Definition~\ref{def:ricIvar} follow from elementary geometric arguments. Since this perturbation is area preserving we have that $|\p_t|=|\p|$. On the other hand,
\begin{align*}
\per(\p_t)
& = \per(\p) + \sqrt{\ell_{i-1}^2+2t\ell_{i-1}\cos\alpha_i^-+t^2} - \ell_{i-1} \\
&\qquad\qquad\qquad\qquad\qquad\qquad + \sqrt{\ell_i^2 -2t\ell_i\cos\alpha_i^++t^2}-\ell_i \\
& = \per(\p) + t (\cos\alpha_{i}^--\cos\alpha_{i}^+) + o(t)
\end{align*}
as $t\to0$. Hence, from
    \begin{equation*}
        \frac{\dd}{\dd t}\left(\frac{\per(\p_t)}{|\p_t|^{1/2}} \right) \bigg|_{t=0} = \frac{1}{|\p|^{1/2}}\bigl(\cos\alpha_{i}^--\cos\alpha_{i}^+\bigr) = 0
    \end{equation*}
the condition \eqref{eq:stat_ricIvar} follows.
\end{proof}

\begin{remark}\label{rem:variations} We observe that \textit{any} variation of a polygon $\p\in\pol_N$ can be expressed in terms of the sliding and tilting variations as in Definitions~\ref{def:sliding}--\ref{def:tilting}. Indeed, let $\p'\in\pol_N$ be any polygon with vertices $\{P_1',\ldots,P_N'\}$ sufficiently close to those of $\p$. To prove the property, by iteration it is enough to consider the case where $\p$ and $\p'$ differ only by one vertex, say $P_i$ (hence $P_j=P_j'$ for all $j\neq i$).

We first observe that given a side $\overline{P_jP_{j+1}}$ we can define a family of variations, similar to the tilting perturbation in Definition~\ref{def:tilting}, by rotating one side with respect to one of its endpoints, say $P_j$ (so that $P_j$ remains fixed and $P_{j+1}$ moves along the line containing the segment $\overline{P_{j+1}P_{j+2}}$). Such a variation can be easily obtained as the result of a composition of our sliding and tilting variations.

Now, if $\p$ and $\p'$ differ only by the $i$-th vertex, we can first rotate the side $\overline{P_{i-1}P_i}$ around $P_{i-1}$, so that the rotated side is contained in the line passing through $P_{i-1}$ and $P_i'$. Then we rotate the side $\overline{P_iP_{i+1}}$ around the point $P_{i+1}$ to align it with $\overline{P_i'P_{i+1}}$. After these two variations the polygon $\p$ is transformed into $\p'$. Since by the previous observation the rotation about a vertex is a combination of sliding and tilting variations, the claim is proved.
\end{remark}
 
\section{Alexandrov's Theorem for Polygons}\label{sec:alexandrov}

In the following theorem we observe that the stationarity conditions with respect to two of the three families of perturbations considered in Theorem~\ref{thm:stationarity} (namely, sliding \& tilting, or tilting \& moving of one vertex) uniquely characterize the regular polygons.

\begin{theorem}[Alexandrov's Theorem for polygons]
\label{thm:alexandrov-pol}
    If $\p\in\pol_N$ satisfies conditions \eqref{eq:stat_sliding}--\eqref{eq:stat_tilting}, or conditions \eqref{eq:stat_tilting}--\eqref{eq:stat_ricIvar}, for all $i\in\{1,\ldots,N\}$, then $\p$ is a regular polygon.
\end{theorem}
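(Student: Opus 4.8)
The plan is to reduce both statements to the strict monotonicity of $\psi$ together with two elementary Euclidean observations. The crucial preliminary remark is the half-angle identity
\[
\psi(\theta) = \frac{1+\cos\theta}{\sin\theta} = \cot\!\left(\frac{\theta}{2}\right).
\]
Since every interior angle of a simple polygon lies in $(0,2\pi)$, we have $\theta/2\in(0,\pi)$, where $\cot$ is strictly decreasing; hence $\psi$ is strictly decreasing, and in particular injective, over the whole range of admissible angles. This single fact drives the argument.

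First I would exploit the tilting condition \eqref{eq:stat_tilting}, which is common to both hypotheses: it reads $\psi(\theta_i)=\psi(\theta_{i+1})$ for every $i$, so injectivity of $\psi$ gives $\theta_i=\theta_{i+1}$ for all $i$, i.e.\ $\p$ is equiangular. Because the interior angles of a simple polygon sum to $(N-2)\pi$, the common angle equals $(N-2)\pi/N<\pi$; in particular $\p$ is convex, and it only remains to prove that it is equilateral, an equilateral and equiangular polygon being regular.

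For the first case (sliding and tilting), equiangularity makes $\psi(\theta_i)+\psi(\theta_{i+1})$ a constant independent of $i$, so the sliding condition \eqref{eq:stat_sliding} forces $\ell_i$ to equal the same value $4\psi(\theta)|\p|/\per(\p)$ for every $i$; thus all sides are equal and $\p$ is regular. For the second case (tilting and moving a vertex), I would read \eqref{eq:stat_ricIvar} as $\cos\alpha_i^-=\cos\alpha_i^+$; since $\alpha_i^\pm\in(0,\pi)$ and cosine is injective there, this yields $\alpha_i^-=\alpha_i^+$. These are exactly the two base angles of the triangle $P_{i-1}P_iP_{i+1}$ at $P_{i-1}$ and $P_{i+1}$, so their equality makes the triangle isosceles and gives $|\side{P_{i-1}}{P_i}|=|\side{P_i}{P_{i+1}}|$, that is $\ell_{i-1}=\ell_i$. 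Ranging over $i$ with the cyclic convention $P_0=P_N$ shows all sides are equal, so again $\p$ is regular.

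The computations are all immediate, and I do not expect a serious obstacle; the proof is essentially a bookkeeping of monotonicity. The only points deserving care are (a) verifying that $\psi$ is injective over the full range of possible interior angles, including reflex angles that could a priori occur in a non-convex polygon --- this is exactly what the identity $\psi=\cot(\cdot/2)$ guarantees --- and (b) recording that equiangularity of a simple polygon already forces convexity, so that the standard implication ``equilateral and equiangular $\Rightarrow$ regular'' applies with no extra topological assumptions.
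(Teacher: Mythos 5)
Your proof is correct and follows essentially the same route as the paper's: injectivity of $\psi$ turns the tilting condition into equiangularity, the sliding condition then forces equal side lengths, and in the second case injectivity of cosine on $(0,\pi)$ gives $\alpha_i^-=\alpha_i^+$ and hence $\ell_{i-1}=\ell_i$. The only (welcome) refinements are the half-angle identity $\psi(\theta)=\cot(\theta/2)$, which justifies injectivity on all of $(0,2\pi)$ more cleanly than the paper's bare assertion that $\psi'<0$, and the remark that equiangularity already forces convexity; neither changes the substance of the argument.
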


\begin{proof}
    Suppose \eqref{eq:stat_sliding} and \eqref{eq:stat_tilting} hold for all $i\in\{1,\ldots,N\}$. Then, by \eqref{eq:stat_tilting}, we have that $\psi(\theta_i)=\lambda$ for some $\lambda\in\R$ and for all $i\in\{1,\ldots,N\}$. Hence the condition \eqref{eq:stat_sliding} yields
        \[
         \ell_i = \frac{4\lambda|\p|}{\per(\p)} \qquad \text{for all } i\in\{1,\ldots,N\},
        \]
    i.e., $\p$ is equilateral. Now, since $\psi^\pr(\theta) < 0$ on $(0,2\pi)$ the function $\psi$ is injective, and \eqref{eq:stat_tilting} yields $\theta_i=\theta_{i+1}$ for all $i\in\{1,\ldots,N\}$, i.e., $\p$ is equiangular.

    Suppose \eqref{eq:stat_tilting} and \eqref{eq:stat_ricIvar} hold for all $i\in\{1,\ldots,N\}$. Since the cosine function is injective on $(0,\pi)$, the condition \eqref{eq:stat_ricIvar} implies that $\alpha_i^- = \alpha_i^+$ for all $i\in\{1,\ldots,N\}$, hence, $\p$ is equilateral. Again, by \eqref{eq:stat_tilting} we obtain that $\p$ is equiangular.
\end{proof}

\begin{remark}\label{rmk:triangles}
In the case $N=3$ the stationarity condition \eqref{eq:stat_sliding} with respect to the sliding variation is always satisfied by any triangle. Indeed, if $\{\p_t\}$ is the variation in Definition~\ref{def:sliding} of a triangle $\p$, then $\p_t$ is a triangle similar to $\p$, so that scaling $\p_t$ by a factor $(\frac{|\p|}{|\p_t|})^{1/2}$ gives back the starting triangle $\p$; hence
\begin{equation*}
    0 = \frac{\dd}{\dd t}\bigg|_{t=0} \per\biggl( \biggl(\frac{|\p|}{|\p_t|}\biggr)^{\frac12} \p_t \biggr)
    = |\p|^{\frac12} \frac{\dd}{\dd t}\left(\frac{\per(\p_t)}{|\p_t|^{1/2}} \right)\bigg|_{t=0}
\end{equation*}
so that the stationarity condition \eqref{eq:stationary} is always satisfied for this variation.

The equilateral triangle is then characterized either by the sole condition \eqref{eq:stat_tilting} or by the sole condition \eqref{eq:stat_ricIvar}. Notice also that imposing \eqref{eq:stat_tilting} on a single side (or \eqref{eq:stat_ricIvar} on a single vertex) yields an isosceles triangle; therefore to characterize the equilateral triangle it is sufficient to impose condition \eqref{eq:stat_tilting} only on two sides (or \eqref{eq:stat_ricIvar} only on two vertices).
\end{remark}

\begin{remark}\label{rmk:equilat-equiang}
From the proof of Theorem~\ref{thm:alexandrov-pol} it is clear that the stationarity conditions with respect to the tilting variation and with respect to the movement of one vertex characterize the class of equiangular and equilateral polygons, respectively. More precisely, we have that for a polygon $\p\in\pol_N$ with $N\geq3$:
\begin{itemize}
    \item $\p$ satisfies \eqref{eq:stat_tilting} for all $i\in\{1,\ldots,N\}$ if and only if $\p$ is equiangular;
    \item $\p$ satisfies \eqref{eq:stat_ricIvar} for all $i\in\{1,\ldots,N\}$ if and only if $\p$ is equilateral.
\end{itemize}
It is an open question as to whether it is possible to characterize by a similar geometric condition the class of polygons $\p\in\pol_N$ which obey the criticality condition \eqref{eq:stat_sliding} with respect to the sliding variation. For $N=3$, all triangles satisfy \eqref{eq:stat_sliding}, as observed in Remark~\ref{rmk:triangles}. For $N=4$, \eqref{eq:stat_sliding} is satisfied by all kites (i.e., quadrilaterals symmetric with respect to their reflection across at least one diagonal). It is an open question as to whether there are other quadrilaterals satisfying \eqref{eq:stat_sliding}. For general $N$ even, all $N$-gons which are reflection-symmetric with respect to the bisectors of their angles satisfy \eqref{eq:stat_sliding}.
\end{remark}

\section{Further Results and Conjectures}\label{sec:questions}

Several classical functionals from shape optimization share with the Euclidean perimeter the property that the only optimal domains are balls. Among these functionals, of paramount importance are the torsional rigidity, the principal (first Dirichlet) eigenvalue of the Laplacian, or the logarithmic capacity, defined for $\Omega\subset\R^d$ as 
\begin{equation*}
    \tau(\Omega)\defeq - \inf_{u\in H^1_0(\Omega)}\int_\Omega \bigl(|\nabla u|^2-2u\bigr)\dd x,
    \qquad
    \lambda_1(\Omega)\defeq \inf_{u\in H^1_0(\Omega)\setminus\{0\}}\frac{\int_\Omega|\nabla u|^2\dd x}{\int_\Omega u^2\dd x},
\end{equation*}
\begin{equation*}
     \capp(\Omega) \defeq \exp \bigl( - \lim_{|x|\to\infty} (u(x) - \log |x|)\bigr),
\end{equation*}
respectively. Here $H^1_0(\Omega)$ denotes the space of functions in the Sobolev space $H^1(\Omega)=W^{1,2}(\Omega)$ which vanish on the boundary of $\Omega$. In the definition of $\capp$ the function $u$ is the log-equilibrium potential of $\Omega$ and satisfies $\Delta u = 0$ in $\R^2 \setminus \Omega$, $u = 0$ on $\pt\Omega$, and $u(x) \sim \log |x|$ as $|x| \to  +\infty$. It is then natural to look at discrete problems where these functionals are restricted to the class of polygons with a fixed number of sides.
In their seminal monograph \cite{PolSze51}, George P\'{o}lya and G\'{a}bor Szeg\H{o} conjectured that regular polygons are optimal for the torsional rigidity $\tau$ and the principal eigenvalue of the Laplacian $\lambda_1$. They proved this conjecture for triangles and quadrilaterals using Steiner symmetrization. Whether regular $N$-gons with $N \geq 5$ are optimal for these two functionals are considered as important open problems in shape optimization. One can further wonder whether the regular polygon is characterized by the stationarity conditions with respect to the families of perturbations as defined in Section~\ref{sec:crit}, i.e., whether a discrete Alexandrov-type theorem for these spectral functionals holds. To date, the optimality of the regular polygon \emph{for every} $N\geq 3$ has only been obtained by Alexander Yu. Solynin and Victor Zalgaller \cite{SolZal04} for the logarithmic capacity $\capp$, and by Dorin Bucur and Ilaria Fragal\`a \cite{BucFra16} for the Cheeger constant
\begin{equation*}
    h(\Omega) \defeq \inf\Bigl\{\frac{\Per(A;\R^2)}{|A|} \colon A\subset\Omega \text{ measurable}\Bigr\}.
\end{equation*}
 Furthermore, Ilaria Fragal\`a and Bozhidar Velichkov \cite{FraVel19} showed that equilateral triangles are characterized as the sole critical points of $\tau$ and $\lambda_1$ with respect to the tilting variations as defined in Definition~\ref{def:tilting}.

Recently, optimization over polygons of nonlocal interaction functionals such as the fractional perimeter or Riesz-type energies
\begin{equation*}
    \Per_s(\Omega)\defeq \int_{\Omega}\int_{\R^d\setminus\Omega}\frac{\dd x \dd y}{|x-y|^{d+s}},
    \qquad
    \mathfrak{R}(\Omega)\defeq \int_{\Omega}\int_{\Omega} K(|x-y|)\dd x \dd y
\end{equation*}
have also attracted interest. Here $s\in(0,1)$ and $K$ is a nonnegative function such that $r\mapsto r^{d-1}K(r)$ is locally integrable on $\R$. When $K$ is strictly decreasing and $C^1$, in \cite{BonCriTop22} we observed that in this case P\'{o}lya and Szeg\H{o}'s argument allows one to conclude that among triangles and quadrilaterals the regular polygon maximizes $\mathfrak{R}$. In the same paper we conjectured that this result holds for every regular polygon with $N\geq 3$ when the energy functional is defined via Riesz kernels $K(|x|)=|x|^{-\alpha}$ with $0<\alpha<2$. Quite surprisingly, Beniamin Bogosel, Dorin Bucur, and Ilaria Fragal\`a \cite{BogBucFra23} recently showed that this conjecture is false for more general kernels. Indeed, for Riesz-type kernels with positive powers, i.e., for $K(|x|)=-|x|^k$ with $k>0$, they showed that for even $N\geq 6$, there exists a critical $\bar{k}$ such that for $k\geq\bar{k}$ the regular polygon is not the maximizer of the Riesz-type energy $\mathfrak{R}$. An analogous property is proved for characteristic kernels $K(|x|)=\chi_{[0,r]}(|x|)$ for suitable $r$ (depending on $N$). Interestingly, only for $k=2$ and $k=4$ were they able to prove that the regular $N$-gon minimizes $\mathfrak{R}$ among \emph{all} $N$-gons with $N\geq 3$ via a polygonal Hardy-Littlewood inequality.

Related to Alexandrov's Soap Bubble Theorem, in \cite{BonCriTop22} we also showed that, under an area or a perimeter constraint, the equilateral triangle and the square are the only stationary polygons with $N=3$ and $N=4$ sides, respectively, with respect to the sliding and tilting deformations in Definitions~\ref{def:sliding} and \ref{def:tilting}; a proof in the general case $N\geq 5$ is still missing. We also mention that the same rigidity theorem has been proved in \cite{BogBucFra23} for all $N\geq3$ for characteristic kernels $K(|x|)=\chi_{[0,r]}(|x|)$ with sufficiently small support (depending on $N$, which in some sense makes the problem more local).

Obtaining the minimality of the regular polygon for the functional $\Per_s$ as well as its characterization as the only critical point with respect to certain classes of perturbations are further open problems.



\subsection*{Acknowledgments} MB is member of the GNAMPA group of INdAM. IT is partially supported by a Simons Collaboration grant 851065 and an NSF grant DMS 2306962.

\bibliographystyle{amsalpha}
\def\MR#1{}
\bibliography{references}

\end{document}